\newcommand{\factor}[2]{\left. \raise 2pt\hbox{\ensuremath{#1}} \right/
        \hskip -2pt\raise -2pt\hbox{\ensuremath{#2}}}
\renewcommand\subsection{
  \renewcommand{\sfdefault}{pag}
  \@startsection{subsection}%
  {2}{0pt}{.8\baselineskip}{.4\baselineskip}{\raggedright
    \sffamily\itshape\small\bfseries
  }}
\renewcommand\section{
  \renewcommand{\sfdefault}{phv}
  \@startsection{section} %
  {1}{0pt}{\baselineskip}{.8\baselineskip}{\centering
    \sffamily
    \scshape
    \bfseries
}}
\setlist[enumerate]{leftmargin=0.8cm}
\setlist[itemize]{leftmargin=0.8cm}
\setlist[description]{leftmargin=0.0cm}
\newtheorem{thm}{Theorem}[section]
\newtheorem{lemma}[thm]{Lemma}
\newtheorem{que}{Question}
\newtheorem{conj}[que]{Conjecture}
\newtheorem{theorem}[thm]{Theorem}
\newtheorem{proposition}[thm]{Proposition}
\theoremstyle{definition}
\newtheorem{definition}[thm]{Definition}
\theoremstyle{remark}
\newtheorem{remark}[thm]{Remark}
\theoremstyle{remark}
\newcommand{\rato}{\dashrightarrow}
\DeclarePairedDelimiter{\Set}{\lbrace}{\rbrace} 
\def\bydef{\coloneqq}
\numberwithin{equation}{section}
\newcommand{\PP}{{\mathbb P}}
\newcommand{\C}{{\mathbb C}}
\newcommand{\Kbar}{{\overline{K}}}
\newcommand{\calC}{{\mathcal C}}
\newcommand{\cE}{{\mathcal E}}
\newcommand{\calF}{{\mathcal F}}
\newcommand{\calL}{{\mathcal L}}
\newcommand{\calO}{{\mathcal O}}
\newcommand{\cO}{{\mathcal O}}
\DeclareMathOperator{\Spec}{Spec}
\numberwithin{equation}{section}
\numberwithin{table}{section}
\title{Simply Connectedness and Hyperbolicity}
\author{Carlo Gasbarri}
\address{Carlo Gasbarri\newline
IRMA, Université de Strasbourg, 7, rue René-Descartes,
67084 Strasbourg Cedex, France }
\email{gasbarri@math.unistra.fr}
\author{Erwan Rousseau}
\address{Erwan Rousseau \newline
Univ Brest, CNRS UMR 6205, Laboratoire de Mathematiques de Bretagne Atlantique, F-29200 Brest, France}
\email{erwan.rousseau@univ-brest.fr}
\author{Amos Turchet}
\address{Amos Turchet \newline
Dipartimento di Matematica e Fisica, Universit\'a degli studi Roma 3, L.go S. L. Murialdo 1, 00146 Roma, Italy }
\email{amos.turchet@uniroma3.it}
\author{Julie Tzu-Yueh Wang}
\address{Julie Tzu-Yueh Wang \newline Institute of Mathematics, Academia Sinica 
No.\ 1, Sec.\ 4, Roosevelt Road
Taipei 10617, Taiwan}
\email{jwang@math.sinica.edu.tw}
\subjclass[2010]{14G40, 11J97, 14G05, 32A22}
\keywords{Hyperbolicity, function fields, Nevanlinna Theory, orbifolds, Campana's conjectures}
\begin{document}
\begin{abstract}
We generalize to arbitrary dimension our previous construction of simply connected weakly-special but not special varieties. We show that they satisfy the function field and complex analytic part of Campana's conjecture. Moreover, we give the first examples, in any dimension, of smooth simply connected nonisotrivial projective varieties of general type that satisfy the function field Lang's conjecture.
\end{abstract}
\maketitle

\section{introduction}\label{introduction}

The goal of this article is to discuss hyperbolicity properties of several simply connected varieties, of any dimension. The interplay between hyperbolicity, arithmetic, and geometric properties of algebraic variaties is among the fundamental problems in Diophantine Geometry and the focus of conjectures of Green, Griffiths, Demailly, Lang and Vojta (see e.g. \cite{De97}, \cite[Conjecture 3.7]{Lang91}, \cite{Lan86}, \cite[Conjecture 3.4.3]{vojta_lect}). For our aims, we can summarize the main predictions in the following Conjecture.

\begin{conj}\label{conj:main}
Let $X$ be a nonsingular quasi-projective variety defined over a number field $K$. If $X$ is of (log) general type there exists a proper closed subset $Z$ (the \emph{exceptional locus}), in a compactification $\overline{X}$ of $X$, such that: 
\begin{description}
    \item[{(Arithmetic)}] $X$ is arithmetically hyperbolic modulo $Z$, i.e. over any ring of $S$-integers the $S$-integral points of $X$ outside of $Z$ are finite;
    \item[{(Function Fields)}] $X_{\Kbar}$ is algebraic hyperbolic modulo $Z$, i.e. there exists an ample line bundle $\calL$ on $\overline{X}$ and a positive constant $\varepsilon >0$ such that, for every nonsingular projective curve $\calC$ and every morphism $\varphi: \calC \to \overline{X}$, such that $\varphi(\calC)$ is not contained in $(\overline{X} \smallsetminus X) \cup Z$, the following holds:
    \[
        \deg \varphi^* \calL \leq \varepsilon \left( 2g(\calC) - 2 + N_\varphi^{[1]}(\overline{X} \smallsetminus X)\right),
    \]
    where $N_\varphi^{[1]}(\overline{X})$ is the cardinality of the support of $\varphi^*(\overline{X} \smallsetminus X)$;
    \item[{(Complex Analysis)}] $X$ is Brody hyperbolic modulo $Z$, i.e. every holomorphic map $f: \C \to X_\C$ such that $f(\C)$ is not contained in $Z$ is constant.
\end{description}
\end{conj}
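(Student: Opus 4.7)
The statement is the classical Green-Griffiths-Lang-Vojta conjecture in its log form, and I would not expect to prove it in full generality: it is one of the central open problems of Diophantine geometry, and indeed the paper's own results are partial verifications on carefully chosen simply connected examples. What follows is therefore a sketch of how one attacks each of the three clauses, together with an indication of which inputs are available and where the genuine obstruction lies.

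For the complex-analytic part, the plan is to follow the Green-Griffiths-Demailly strategy. Since $X$ is of log general type, one expects enough positivity in the log cotangent sheaf $\Omega^1_{\overline{X}}(\log D)$ (with $D = \overline{X} \smallsetminus X$) to produce, via Riemann-Roch and holomorphic Morse inequalities on jet bundles, many global sections of the bundles of invariant jet differentials of order $k$ and weight $m$ vanishing on an ample divisor and along $D$. A pulled-back entire curve must then lie in the base locus of all such jet differentials; combining this with a Second Main Theorem / tautological inequality of McQuillan-Siu-Yeung type shows the base locus is a proper closed $Z$, and every nonconstant $f\colon \C \to X$ factors through $Z$.

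For the function-field clause one would run the algebraic avatar of the same argument. Pulling back jet differentials along $\varphi \colon \calC \to \overline{X}$ and applying the Vojta-McQuillan tautological inequality gives an estimate of the shape $\deg \varphi^*\calL \leq \varepsilon (2g(\calC) - 2 + N^{[1]}_\varphi(D))$ whenever $\varphi(\calC)$ avoids the base locus of a suitably positive twist of $\Omega^1_{\overline{X}}(\log D)$. A Bend-and-Break / bounded-family argument then shows that the curves failing the inequality form a bounded family whose closure is a proper subvariety, which one takes to be $Z$ (after enlarging by the analytic $Z$ for consistency). The technical step here is controlling the twist so that both $\varepsilon$ and $Z$ are independent of $\varphi$.

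The arithmetic clause is the deepest, and is where I would expect the real obstacle to lie. The natural route is Vojta's dictionary: replace $N^{[1]}_\varphi(D)$ by the truncated counting function at primes of bad reduction, $2g(\calC)-2$ by the logarithmic discriminant of the number field, and hope that the same jet-differential input yields a height inequality via a Ru-Vojta-type general Subspace Theorem argument. Unconditional progress exists when the cotangent bundle is ample enough to embed $X$ in a (semi-)abelian variety, by Faltings and Vojta, or when $X$ admits sufficiently many global symmetric differentials to apply Corvaja-Zannier / Levin-Autissier techniques. The main obstruction is the absence of a genuine arithmetic analogue of the tautological inequality for jet differentials: a complete proof of this step would amount to a proof of Vojta's main conjecture, so the realistic plan is to reduce to cases where one of the above conditional inputs applies and then patch the three clauses through the complex/function-field/arithmetic analogy, which is precisely the strategy the paper pursues on its specific family of simply connected varieties.
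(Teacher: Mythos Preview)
The statement you were asked to prove is labelled \texttt{conj:main} in the paper and is stated there as a \emph{conjecture}; the paper does not attempt to prove it and gives no proof. It is the (log) Green--Griffiths--Lang--Vojta conjecture, which as you correctly note is wide open in general. So there is nothing to compare your attempt against: the paper only states the conjecture in the introduction as motivation, and then proves special instances (Theorems~\ref{th:main_th_wspecial} and~\ref{th:proj_nonisotrivial_main}) for particular constructed varieties, not the general statement.

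Your write-up is accurate as a survey of the standard strategies (Demailly--Green--Griffiths jet differentials for the analytic and function-field parts, Vojta's dictionary and the Subspace Theorem circle of ideas for the arithmetic part), and you are right that the arithmetic clause is the genuine bottleneck. But it is not a proof and cannot be, since the statement is open; the honest answer here is simply that the paper presents this as a conjecture and does not prove it.
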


Campana proposed a more general framework using his notion of \emph{special} varieties introduced in \cite{Ca04}. A quasi-projective variety $X = \overline{X} \smallsetminus D$, where $\overline{X}$ is a nonsingular projective variety and $D$ is an snc divisor, both defined over a number field $K$, is special if for every rank 1 saturated coherent sheaf $\calF \subset \Omega^p_{\overline{X}_\Kbar}(\log D)$ one has $\kappa(\overline{X},\calF) < p$ (where $\kappa$ denotes the Itaka dimension). Equivalently $X$ is special if it has no fibration of general type in the sense of Campana (see \cite[Theorem 2.27]{Ca04}). In particular, (log) general type varieties are not special, but the class of non special varieties is strictly larger. After Campana, one expects the following generalizations of the above conjectures to be satisfied:

\begin{conj}\label{conj:Campana}
\begin{description}
Let $X$ be a nonsingular quasi-projective variety defined over a number field $K$. If $X$ is \emph{not} special then
    \item[(Arithmetic)] the set of integral points $X(\calO_L)$ is not Zariski dense for every finite extension $L \supset K$;
    \item[(Function Fields)] there is a dominant map $\pi: X \to Y$ (with $\dim Y > 0$), an ample line bundle $\calL$ on $\overline{Y}$, a positive constant $\varepsilon >0$, and a proper closed subset $Z \subset \overline{Y}$ such that, for every nonsingular projective curve $\calC$ and every morphism $\varphi: \calC \to \overline{X}$ such that $\pi(\varphi(\calC))$ is not contained in $(\overline{Y} \smallsetminus Y) \cup Z$ the following holds:
    \[
        \deg \varphi^*(\pi^* \calL) \leq \varepsilon \left( 2g(\calC) - 2 + N_\varphi^{[1]}(\overline{Y} \smallsetminus Y)\right);
    \]
    \item[(Complex Analysis)] there is no entire curve $\C \to X$ with Zariski dense image.
\end{description}
\end{conj}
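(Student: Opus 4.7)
The plan is first to acknowledge that Conjecture~\ref{conj:Campana} is open in full generality, so any honest proof proposal must either restrict to a specific class of non-special varieties (such as those constructed in this paper) or reduce the problem to a known hyperbolicity statement on a lower-dimensional base. The natural tool provided by Campana's theory is the \emph{core map} $c: X \to C(X)$, whose general fiber is special and whose orbifold base $(C(X), \Delta_c)$ is of general type (cf.\ \cite[Theorem 2.27]{Ca04}). Since $X$ is not special the core map is nontrivial, so $\dim C(X) > 0$, and this $c$ is the natural candidate for the dominant map $\pi: X \to Y$ required by the Function Field and Complex Analysis parts.

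For the Function Fields statement, I would take $Y = C(X)$, choose an ample line bundle $\calL$ on a smooth projective model $\overline{Y}$, and deduce the inequality
\[
\deg \varphi^*(\pi^* \calL) \leq \varepsilon \left( 2g(\calC) - 2 + N_\varphi^{[1]}(\overline{Y} \smallsetminus Y) \right)
\]
from a function-field Lang-type bound for the orbifold pair $(\overline{Y}, \Delta_c)$. The inputs would be the orbifold cotangent positivity supplied by Campana--P\u{a}un together with a tautological inequality, yielding a bound on $\deg \varphi^*(\pi^* \calL)$ in terms of the Euler characteristic of $\calC$ twisted by the orbifold multiplicities along $\Delta_c$; the exceptional locus $Z$ is designed to absorb curves tangent to $\Delta_c$ or contained in special fibers of $c$.

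For the Complex Analysis part, the same $\pi = c$ is used: an entire curve $f: \C \to X$ projects to an entire curve on $C(X)$ satisfying Campana's orbifold ramification condition along $\Delta_c$. Since $(C(X), \Delta_c)$ is of general type, the orbifold Green--Griffiths--Lang conjecture predicts that such orbifold entire curves are algebraically degenerate, from which one deduces that $f(\C)$ cannot be Zariski dense in $X$, using specialness of the fibers of $c$ to rule out dense images inside fibers. The Arithmetic part would follow the same reduction, with $S$-integral points replacing morphisms and $M_K$-integral sections of the orbifold base replacing entire curves.

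The main obstacle is that the orbifold versions of Lang's and Green--Griffiths--Lang's conjectures are themselves open, so an unconditional treatment must exploit extra structure of the specific non-special varieties at hand. For the simply connected weakly special examples constructed in this paper, the strategy I expect is to realize the core map explicitly and verify hyperbolicity of the orbifold base by hand---e.g.\ through orbifold jet differentials, direct Nevanlinna-type estimates, or by reducing via a ramified cover to an already hyperbolic variety such as a quotient of the ball or a product of curves of high genus. The crux of the argument will therefore be the geometric analysis of the orbifold base of the explicit construction, not an appeal to the full strength of Campana's theory.
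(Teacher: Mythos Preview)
The statement in question is a \emph{conjecture}, and the paper does not prove it; it is stated in the introduction as motivation, and the paper's contribution is to verify the Function Fields and Complex Analysis parts for the specific simply connected weakly-special-but-not-special varieties constructed in Section~\ref{sec:weakly}. You correctly recognize this from the outset, and your final paragraph accurately anticipates the paper's actual strategy: the core fibration is realized explicitly as $F\colon X \to (S,\Delta)$, and hyperbolicity of the orbifold base $(\overline{S}, D+\Delta)$ is established directly via truncated Ru--Vojta/Nevanlinna-type estimates (Proposition~\ref{prop:weak_hyp}), which are then fed into the argument of Theorem~\ref{th:main_th_wspecial}. The Arithmetic part is not treated in the paper.

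So there is no proof in the paper to compare your general sketch against. Your conditional reduction via the core map and the orbifold Lang and Green--Griffiths--Lang conjectures is the standard heuristic and is correctly identified as non-unconditional; your expectation about how the paper handles its specific examples is on target.
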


\medskip

In this paper we discuss various examples where we can prove these conjectures for \emph{simply connected} varieties.

The search of simply connected examples, where the above mentioned conjectures can be proven true, is a central theme in Diophantine Geometry. In fact some of the skepticism around the claims in the arithmetic setting, can be traced to the lack of examples of smooth \emph{projective} simply connected varieties $X$ of general type of dimension at least 2, defined over a number field $K$, where the rational points are degenerate, i.e. where $X(L)$ is not Zariski dense for every finite extension $L \supset K$. In particular, to our knowledge, there is no single example of a smooth projective simply connected surface of general type that is pseudo-arithmetically hyperbolic, or even where the rational points can be proven to be degenerate. 

For quasi-projective varieties the situation is different: Corvaja and Zannier in \cite[Corollary 2, Theorem 3]{Corvaja:Zannier:2010} gave the first example of a \emph{quasi-projective} simply connected surface of log general type where the integral points, over any ring of $S$-integers, are not Zariski-dense. In \cite[Theorem 1.3, Proposition 1.4]{RTW2} we used a truncated version of the Ru-Vojta method to produce examples in arbitrary dimensions of quasi-projective simply connected varieties with a degenerate set of $S$-integral points. Apart from independent interest, these examples serve as the key input to construct simply connected special varieties that are not of log-general type but that are weakly special (see Definition \ref{def:wspecial}). This was achieved in dimension three in \cite[Section 4]{RTW}. In this article we generalize our construction and produce simply connected quasi-projective weakly special varieties $X$, of any dimension, fibered over a special variety $S$. Furthermore, in Theorem \ref{th:main_th_wspecial} we show that these examples satisfy the function field and complex analytic part of Campana's conjecture. In particular the varieties $X$ contradict the so-called Weakly Special Conjecture \cite[Conjecture 12]{RTW}.\medskip


In another direction we present what we believe to be the first example of smooth \emph{projective} simply connected \emph{non-isotrivial} varieties of general type over the function field of a curve, for which we can prove Lang's conjecture. Previous works of Debarre and Bogomolov \cite{debarre} have shown how to construct simply connected nonsingular projective varieties with ample cotangent (and more generally with the same fundamental group as any given nonsingular projective variety, see \cite[Proposition 26]{debarre}). By work of Demailly and Kobayashi, these varieties are algebraically hyperbolic and Brody hyperbolic. In particular they yield examples of isotrivial varieties over the function field of a curve where Lang's conjecture hold (for an analogue in characteristic $p$ see \cite{KPS}).

In Theorem \ref{th:proj_nonisotrivial_main}, building on Debarre's construction, we present examples of non-isotrivial varieties over the function field of a curve where we can prove Lang's conjecture. The key idea is to construct a pencil, inside a simply connected variety with ample cotangent, given by sufficiently general hyperplane sections.\medskip


\subsection*{Acknowledgements}
We thank Ariyan Javanpeykar for comments on an early version of the paper.
ER was supported by Institut Universitaire de France and the ANR project \lq\lq FOLIAGE\rq\rq{}, ANR-16-CE40-0008. AT was partially supported by PRIN2017 ``Advances in Moduli Theory and Birational Classification'' and PRIN2020 ``Curves, Ricci flat Varieties and their Interactions'' and is a member of GNSAGA-INdAM. JW was supported in part by Taiwan's NSTC grant 110-2115-M-001-009-MY3.

\section{Weakly Special but not special varieties}\label{sec:weakly}

We recall the following definitions.

\begin{definition}\label{def:wspecial}
	A smooth quasi-projective variety $X$ over a field $K$ is \emph{weakly special} if for every \'{e}tale morphism $u: X' \to X$ the variety $X'$ does not admit any dominant rational map $f' : X' \to Z'$ to a positive dimensional variety $Z'$ of log-general type.
\end{definition}

The aim of this section is to construct weakly special, quasi-projective varieties $X_m$, of any dimension greater or equal to 3, fibered over a variety $S$, such that the orbifold base (see \cite[Definition 2.4]{RTW}) of the map $X_m \to S$ has hyperbolic properties that contradict the Weakly Specialness Conejcture \cite[Conjecture 12]{RTW}. We use the analogue of the construction given in \cite[Theorem 4.2]{RTW} and, for the degeneracy result, the new theorem \cite[Theorem 2]{RTW2} (and its function field and complex analytic analogues \cite[Theorem 13 and Theorem 18]{RTW2}). \medskip

One fundamental feature of our construction is simply connectedness. In fact, for a simply-connected variety $X$, to be weakly special but not special it is sufficient the check the following two properties:
\begin{enumerate}
	\item If $X\to Z$ is a dominant morphism to a positive dimensional variety $Z$, then $Z$ is not of general type;
	\item There exists a general type fibration (in the sense of Campana) $f\colon X\to Y$ (and the variety $Y$, by part (1) is \emph{not} of general type).
\end{enumerate}

We begin with a generalization of \cite[Theorem 4.2]{RTW} where we adapt our previous construction to the more general setting of quasi-projective varieties of arbitrary dimension. We show that the construction yields a simply-connected and not special quasi-projective variety. However, to conclude that the output variety $X$ is weakly special we need to add the extra hypothesis that the base the variety $S$ is \emph{special}. 

\begin{theorem}\label{th:wspecial_construction}
Let $m$ be a positive integer, and let $T,S = \overline{S} \smallsetminus D$ be two quasi-projective varieties with fibrations \(f: T \to \PP^1\) and \(g: \overline{S} \to \PP^1\) such that:
	\begin{enumerate}
	  \item
	    $T$ is a smooth surface and the fibration \(f\colon T\to\PP^{1}\) has a single multiple fiber \(f^{-1}(0)\eqqcolon m\cdot T_0\), with \(T_0\) a smooth elliptic curve, and another (singular) simply-connected fiber;
	  \item
	     $S$ is a smooth quasi-projective variety and the fibration \(g\colon \overline{S}\to\PP^{1}\) has a simple normal crossing fiber \(\overline{S}_{0}\bydef g^{-1}(0)\) such that:
	    \begin{enumerate}
	      \item
		the variety \(S\) is not of log general type but the orbifold variety 
		\[
		  (\overline{S},D + \Delta)
		  \bydef
		  (\overline{S}, D + (1-1/m)\cdot \overline{S}_{0})
		\]
		is an orbifold of general type;
	      \item
		the complement of \(\overline{S}_{0}\) in \(S\) is simply connected.
	    \end{enumerate}
	\end{enumerate}
      
Furthermore, let $X$ be the normalization of the total spaces of the natural (orbifold) elliptic fibration with equidimensional fibers defined by \(f\) and \(g\), i.e. 
	\[
	  X
	  \bydef(
	  S\times_{\PP^1}T)^\nu
	  \stackrel{F}{\longrightarrow}
	  (S,\Delta).
	\]
Then $X$ is a simply connected, quasi-projective smooth variety that is not special. Moreover, if $S$ is special, then $X$ is weakly special.
\end{theorem}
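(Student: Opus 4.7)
The plan is to adapt the dimension-three argument of \cite[Theorem~4.2]{RTW} to arbitrary dimension, verifying in turn: smoothness and quasi-projectivity, simple connectedness, non-specialness, and the weakly special property of $X$ when $S$ is assumed special. First, I would check \emph{smoothness and quasi-projectivity}: quasi-projectivity is immediate, since $X$ is the normalization of a closed subscheme of a product of quasi-projective varieties. For smoothness I work locally: at a smooth point of $T_0$ and a smooth point of a component of $\overline{S}_0$ (away from the singular locus of this SNC divisor), analytic coordinates identify $f$ with $t = x^m$ and $g$ with $t = s_1$, so the fiber product $S \times_{\PP^1} T$ is locally $\{x^m = s_1\}$, a smooth hypersurface already equal to its own normalization. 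The remaining loci---crossings of $\overline{S}_0$ and the other special fibers of $f$---are handled by analogous local computations using the SNC structure of $\overline{S}_0$ together with the multiplicity-$m$ structure of $T_0$.

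For \emph{simple connectedness}, I would exploit the two projections $\pi_T \colon X \to T$ and $F \colon X \to S$. Over a generic point of $T$ outside the special fibers of $f$, the fiber of $\pi_T$ is isomorphic to $S \smallsetminus \overline{S}_0$, which is simply connected by hypothesis (2)(b); this controls the vertical contribution to $\pi_1(X)$. The remaining horizontal contribution is killed by the simply connected singular fiber of $f$ provided by hypothesis (1), combined with the absorption of the multiplicity $m$ of $T_0$ effected by normalization. A van Kampen / long-exact-sequence argument, following the template of \cite[Theorem~4.2]{RTW}, then yields $\pi_1(X) = 1$. I expect the main technical point here to be carefully tracking the homotopy contribution of the special fibers of $F$ near $\overline{S}_0$.

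For \emph{non-specialness}, the construction arranges that $F$ has multiplicity exactly $m$ along $\overline{S}_0$ (since normalization absorbs the multiplicity of $T_0$ along $f^{-1}(0)$), so the orbifold base of $F$ is exactly $(\overline{S}, D + (1-1/m)\overline{S}_0)$. This orbifold is of general type by hypothesis (2)(a), so $F$ is a Campana general type fibration with positive-dimensional base, witnessing that $X$ is not special.

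Finally, for \emph{weakly special} under the assumption that $S$ is special, simple connectedness of $X$ reduces the task to ruling out a dominant rational map $\varphi\colon X \dashrightarrow Z'$ with $Z'$ a positive-dimensional variety of log general type. The generic fiber of $F$ is an elliptic curve---hence special---and the orbifold base $(\overline{S}, D + \Delta)$ is of general type, so $F$ coincides with the Campana core of $X$. By the universal property of the core (cf.\ \cite[Theorem~2.27]{Ca04}), $\varphi$ would factor through $F$, producing a dominant rational map $S \dashrightarrow Z'$ to a positive-dimensional log-general-type variety, contradicting the specialness of $S$. The main subtlety here is verifying that $F$ really is the core, which is precisely where the specialness of $S$ is essential: it prevents any further general-type decomposition of the base $(\overline{S}, D + \Delta)$.
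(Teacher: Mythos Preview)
Your non-specialness argument matches the paper's. For weak specialness the paper takes a shorter route: it does not identify $F$ with the core at all, but simply observes that the fibers of $F$ are special (elliptic curves) and invokes \cite[Theorem~2.7]{Ca04} to conclude that any dominant map $h\colon X\to Z$ with $Z$ of general type factors through the base $S$, which immediately contradicts the specialness of $S$. Your detour through the core is workable but unnecessary, and your closing remark is confused: the specialness of $S$ plays no role in identifying $F$ with the core (that uses only that the fibers are special and that the orbifold base is of general type); it is used solely for the final contradiction.

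The genuine gap is in your simple-connectedness argument. The fiber of $\pi_T\colon X\to T$ over a point $t\in T$ is $g^{-1}(f(t))$, a single fiber of $g$, not $S\smallsetminus\overline{S}_0$ as you claim. Nothing in the hypotheses makes the fibers of $g$ simply connected, so the argument via $\pi_T$ does not get off the ground.

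The paper argues via the other projection $F\colon X\to S$. Setting $D_0:=F^{-1}(\overline{S}_0)$, the restricted map $F\colon(X\smallsetminus D_0)\to(S\smallsetminus\overline{S}_0)$ is a fibration without multiple fibers (since $f$ has none over $\PP^1\smallsetminus\{0\}$) and possesses a simply-connected fiber, namely the one lying over the simply-connected singular fiber of $f$ provided by hypothesis~(1). It follows that $F_*\colon\pi_1(X\smallsetminus D_0)\to\pi_1(S\smallsetminus\overline{S}_0)$ is an isomorphism; the target is trivial by hypothesis~(2)(b), hence $\pi_1(X\smallsetminus D_0)=1$, and surjectivity of $\pi_1(X\smallsetminus D_0)\to\pi_1(X)$ gives $\pi_1(X)=1$. (The paper does not separately verify smoothness of $X$ in its proof.)
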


      \begin{proof}
	We first notice that the orbifold base of the fibration \(F\) is indeed \(\Delta\):
	by construction, the only multiple fibers lie above \(\overline{S}_{0}\), and \(F^{\ast}\overline{S}_{0}=m\cdot(\overline{S}_{0}\times T_{0})\).
	Since by assumption \((\overline{S},D+\Delta)\) is a of general type, it follows immediately that \(X\) is not special.

	We now show that $X$ is simply-connected. Let \(D_0\bydef F^{-1}(\overline{S}_{0})\); then, the fibration \(F\colon(X\smallsetminus D_0)\to(S\smallsetminus \overline{S}_{0})\) is a fibration without multiple fibers 
	(since \(f\colon(T\smallsetminus T_0)\to \PP^1\smallsetminus\Set{0}\) has no multiple fibers),
	and with a simply-connected fiber. 
	This implies that \(F_*\colon\pi_1(X\smallsetminus D_0)\to \pi_1(S\smallsetminus \overline{S}_{0})\) is an isomorphism. 
	Using our assumption, the group \(\pi_1(X\smallsetminus D_0)\) is hence trivial.
	This implies that $X$ is simply-connected since the natural map \(\pi_1(X\smallsetminus D_0)\to \pi_1(X)\) is surjective.

	Finally, let us assume that $S$ is special. Since $X$ is simply-connected, it does not admit any non-trivial \'etale cover, therefore, to prove that \(X\) is weakly special, it suffices to show that no fibration \(h\colon X\to Z\) exists, with \(Z\) of general type and of positive dimension.
	Assume by contradiction that such an \(h\) exists. 
	Since, by construction, the fibers of $F$ are special, it follows from \cite[Theorem 2.7]{Ca04} that there exists a dominant map $S \to Z$. But since $S$ is special this contradicts the assumption that $Z$ is of general type.
      \end{proof}

We now use \cite[Theorem 2]{RTW2} and \cite[Proposition 1]{RTW2} to construct examples of simply-connected weakly special but not special quasi-projective varieties, in any dimension. This extends our previous construction in dimension 2 given in \cite[Example 4.3, 4.4]{RTW}.\medskip

\paragraph*{\textbf{Construction.}} Let $n \geq 2$ and let $r \geq 2n+1$ be two integers. Let $D_0,D_1,\dots,D_r$ be hypersurfaces in general position in $\PP^n$ and assume that $D_0$ is smooth and $D_1 + \dots + D_r$ has simple normal crossing singularities. Furthermore, we assume that the degree of the hypersurfaces $D_{i}$'s satisfy the following conditions:
\[
	\deg D_i = \begin{cases}
	d_i &\text{ for } 1\leq i\leq r-n-1\\
	1 &\text{ for } r-n \leq i \leq r \\
	d_0 = \sum_{j=1}^{r-n-1} d_j &\text{ for } i = 0
	\end{cases}
\]
We define $T_i = D_i \cap D_0$ and $T$ to be the union of the $T_i$'s. 

\begin{lemma}\label{lem:kzero}
In the above setting, let $\pi: \overline{S} \to \PP^n$ be the blow-up of $\PP^n$ along $T$ and let $\widetilde{D}_i\bydef \pi_*^{-1} D_i$ be the strict transform of $D_i$ in $\overline{S}$. Finally, we let $D = \widetilde{D}_{r-n} + \dots + \widetilde{D}_{r}$. Then, the pair $(\overline{S},D)$ is special.
\end{lemma}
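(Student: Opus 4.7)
The plan is to exhibit $S = \overline{S} \setminus D$ as birational to the algebraic torus $\G_m^n$, and then invoke the specialness of $\G_m^n$ together with the birational invariance of specialness for smooth quasi-projective varieties.

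\emph{First}, recall that the complement of $n+1$ hyperplanes in general position in $\PP^n$ is isomorphic to $\G_m^n$. I would show that the restriction $\pi|_S$ factors through this torus, i.e.\ $\pi(S) \subseteq \G_m^n$. Away from the $\pi$-exceptional locus this is automatic, since $p \notin \widetilde{D}_j$ forces $\pi(p) \notin D_j$ for $j \ge r-n$. The subtle case is when $p$ lies on an exceptional divisor $E_i$ above a point $q \in T_i = D_0 \cap D_i$: in local coordinates $(x_2, u, x_3, \dots, x_n)$ adapted to $T_i = \{x_1 = x_2 = 0\}$ with $x_1 = u x_2$, the pullback of the linear form $\ell_j = a x_1 + b x_2 + c_3 x_3 + \dots + c_n x_n$ defining $D_j$ becomes $x_2(au+b) + c_3 x_3 + \dots + c_n x_n$; restricted to the fiber $E_i|_q = \{x_2 = 0, x_3 = q_3, \dots, x_n = q_n\}$ this simplifies to $\ell_j(q)$. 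Hence $E_i|_q \subseteq \widetilde{D}_j$ exactly when $q \in D_j$, so $p \in E_i \cap S$ forces $\pi(p) \in T_i \setminus (D_{r-n} \cup \cdots \cup D_r) \subseteq \G_m^n$. The morphism $\pi|_S \colon S \to \G_m^n$ is then visibly surjective and birational, being an isomorphism above $\G_m^n \setminus T$.

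\emph{Second}, the torus $\G_m^n$ is special in the sense of Campana. Indeed, its log-smooth compactification $(\PP^n, D_{r-n} + \cdots + D_r)$ has log-cotangent sheaf $\Omega^1_{\PP^n}(\log(D_{r-n} + \cdots + D_r)) \cong \calO_{\PP^n}^{\oplus n}$, generated by the logarithmic differentials of the torus coordinates. Hence $\Omega^p(\log D) \cong \calO_{\PP^n}^{\binom{n}{p}}$, and every rank-$1$ saturated subsheaf is isomorphic to $\calO_{\PP^n}$, which has Iitaka dimension $0 < p$. Thus no Bogomolov sheaf exists and specialness follows from \cite[Theorem 2.27]{Ca04}.

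\emph{Finally}, specialness is a birational invariant of smooth quasi-projective varieties, so from the birational equivalence of $S$ and $\G_m^n$ established in the first step we conclude that $(\overline{S}, D)$ is special. The main technical obstacle is the local coordinate computation verifying the behaviour of the exceptional fibres $E_i|_q$ relative to the strict transforms $\widetilde{D}_j$; the remaining ingredients are standard facts about tori and about Campana's notion of specialness.
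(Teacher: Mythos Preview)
Your overall strategy is sound and genuinely different from the paper's. The paper computes the log canonical bundle of $(\overline{S},D)$, shows that $\kappa(\overline{S}\smallsetminus D)=\kappa\bigl(\PP^n\smallsetminus(D_{r-n}+\dots+D_r)\bigr)=0$, and then invokes Campana's theorem \cite[Th\'eor\`eme~6.7]{Ca11} that log Kodaira dimension zero implies special. You instead verify specialness directly on the concrete model $\G_m^n$ (where the log cotangent is trivial, so no Bogomolov sheaf can exist) and transport it by birational invariance. Your route is more elementary in that it bypasses the structural result ``$\kappa=0\Rightarrow$ special''; the paper's route is shorter but leans on a deeper citation.

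There is, however, an error in your first step. The claim $\pi(S)\subseteq\G_m^n$ is false. Your local computation shows that $\pi^*\ell_j$ restricts to the constant $\ell_j(q)$ on the fibre $E_i|_q$, but this identifies the \emph{total} transform of $D_j$, which coincides with the strict transform $\widetilde{D}_j$ only when $D_j\not\supset T_i$, i.e.\ when $i\neq j$. For $i=j\in\{r-n,\dots,r\}$ one has $T_j\subset D_j$, the total transform acquires $E_j$ as a component, and $\widetilde{D}_j$ meets each fibre $E_j|_q$ in a single point rather than the whole $\PP^1$. Thus a generic $p\in E_j$ lies in $S$ yet satisfies $\pi(p)\in T_j\subset D_j$, so $\pi(p)\notin\G_m^n$. (Your own conclusion would force $E_j\cap S=\emptyset$ since $T_j\smallsetminus(D_{r-n}\cup\cdots\cup D_r)=\emptyset$, which is already suspicious.)

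Fortunately this does not damage the argument: you only need $S$ and $\G_m^n$ to be \emph{birational}, and that is immediate from $\pi\colon\overline{S}\to\PP^n$ being a blow-up, since $S$ and $\G_m^n$ are dense opens in birational varieties. So drop the attempt to produce a morphism $S\to\G_m^n$, state the birational equivalence directly, and steps~2 and~3 go through. (A small side remark on step~2: a saturated line subsheaf of $\calO_{\PP^n}^{\oplus N}$ need not be $\calO_{\PP^n}$ --- it can be $\calO_{\PP^n}(-d)$ for any $d\ge 0$ --- but its Iitaka dimension is still $\le 0<p$, so your conclusion stands.)
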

\begin{proof}
We start by computing the log canonical bundle of $(\overline S, D)$ as follows:
\[
	K_{\overline{S}} + D = \left(\pi^* K_{\PP^n} + E\right) + \left( \pi^* (D_{r-n} + \dots + D_r)  - E' \right) = \pi^*(K_{\PP^n} + D_{r-n} + \dots + D_r) + (E - E'),
\] 
where $E$ is the total exceptional divisor and $E'$ is the sum of the exceptional divisors in the pullback of $D_{r-n} + \dots + D_{r}$. Then, since $\overline{S}$ is smooth, and $\pi(E - E')$ has codimension two in $\PP^n$, we get the isomorphism
\[
	H^0\left(\overline{S}, \pi^*( K_{\PP^n} + D_{r-n} + \dots + D_r)\right) \to H^0\left(\overline{S}, \pi^*(K_{\PP^n} + D_{r-n} + \dots + D_r) + (E - E')\right),
\]
and conclude that $\kappa(\overline{S} \smallsetminus D) = \kappa\left(\PP^n \smallsetminus (D_{r-n}+\dots+D_{r})\right)$.
Moreover, since $D_{r-n}+\dots+D_{r}$ are by definition $n+1$ hyperplanes in $\PP^n$, we get that $\kappa(\overline{S}\smallsetminus D) = 0$. The conclusion follows by \cite[Th\'eor\`eme 6.7]{Ca11}.
\end{proof}

We are now in the position to apply Theorem \ref{th:wspecial_construction}. The fibration $f: T \to \PP^1$ is the same fibration constructed in \cite[Example 4.3]{RTW}. The second fibration is obtained as follows: consider the fibration induced by $D_0$ and $D_1+\dots+D_{r-n-1}$ in $\PP^n$. This induces a rational map $g': \PP^n \rato \PP^1$ which yields a fibration $g: \overline S \to \PP^1$. We claim that $g$ satisfies the hypothesis of Theorem \ref{th:wspecial_construction}. First of all, by construction, $S$ is smooth and the fiber $\overline{S}_0 = g^{-1}(0)$, being the strict transform of $D_1 + \dots + D_{r-n-1}$ has simple normal crossing singularities. By \cite[Proposition 1]{RTW2} the variety $\overline{S} \smallsetminus (\widetilde{D}_1 + \dots + \widetilde{D}_{r-n-1} + D) = \overline{S} \smallsetminus (\widetilde{D}_1 + \dots + \widetilde{D}_r)$ is simply-connected.
Finally we have the following lemma.

\begin{lemma}\label{lem:orb_general_type}
In the above setting, let $m \geq 2$ be an integer and let 
\[
	\Delta = \left(1 - \dfrac{1}{m}\right) \overline{S}_0 = \left(1 - \dfrac{1}{m} \right) \left( \widetilde{D}_1 + \dots + \widetilde{D}_{r - n - 1} \right).
\]
Then the orbifold $(\overline{S},D + \Delta)$ is of general type.
\end{lemma}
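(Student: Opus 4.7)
The plan is to compute the orbifold log-canonical $\Q$-divisor $K_{\overline{S}}+D+\Delta$ explicitly on $\overline{S}$ and to show that it is big: by Campana's definition (see \cite{Ca04}), this is exactly the statement that $(\overline{S},D+\Delta)$ is an orbifold of general type.

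I would proceed along the lines of the proof of Lemma~\ref{lem:kzero}. Writing $E_i$ for the $\pi$-exceptional divisor lying above $T_i=D_i\cap D_0$ (for $1\le i\le r$), one has
\[
K_{\overline{S}}\;=\;\pi^{\ast}K_{\PP^n}+\sum_{i=1}^r E_i,\qquad \widetilde D_i\;=\;\pi^{\ast}D_i-E_i.
\]
Substituting these into
\[
K_{\overline{S}}+D+\Delta\;=\;K_{\overline{S}}+\sum_{i=r-n}^r\widetilde D_i+\Bigl(1-\tfrac{1}{m}\Bigr)\sum_{i=1}^{r-n-1}\widetilde D_i,
\]
the exceptional divisors $E_{r-n},\dots,E_r$ cancel identically against the corresponding $\widetilde D_i$'s, while each $E_i$ with $1\le i\le r-n-1$ cancels only up to the coefficient $1-\tfrac{1}{m}$, leaving a positive residue $\tfrac{1}{m}E_i$. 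The net identity is
\[
K_{\overline{S}}+D+\Delta\;=\;\pi^{\ast}\!\left(K_{\PP^n}+\sum_{i=r-n}^r D_i+\Bigl(1-\tfrac{1}{m}\Bigr)\sum_{i=1}^{r-n-1}D_i\right)+\frac{1}{m}\sum_{i=1}^{r-n-1}E_i.
\]

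To finish I would invoke the degree hypotheses: the $n+1$ hyperplanes $D_{r-n},\dots,D_r$ give the class $\calO_{\PP^n}(n+1)$, which cancels $K_{\PP^n}=\calO_{\PP^n}(-n-1)$, whereas $\sum_{i=1}^{r-n-1}D_i$ has degree $d_0\ge 1$. Consequently the pullback term is $\pi^{\ast}\calO_{\PP^n}\!\bigl((1-\tfrac{1}{m})d_0\bigr)$, which is big on $\overline{S}$ because $(1-\tfrac{1}{m})d_0>0$ and $\pi$ is birational. Adding the effective $\Q$-divisor $\tfrac{1}{m}\sum_{i=1}^{r-n-1}E_i$ preserves bigness, so $K_{\overline{S}}+D+\Delta$ is big, as required.

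The only point requiring attention is the bookkeeping of the exceptional divisors: the blow-up centre $T=\bigcup T_i$ is reducible and, in general position, the components meet only in codimension three, so one should view $\pi$ as a suitable (iterated) smooth resolution producing one exceptional divisor $E_i$ per component $T_i$, exactly as in Lemma~\ref{lem:kzero}. Once this is set up, the coefficient computation $1-(1-\tfrac{1}{m})=\tfrac{1}{m}$ is automatic and bigness is forced.
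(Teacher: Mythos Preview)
Your argument is correct and is essentially identical to the paper's proof: both compute $K_{\overline{S}}+D+\Delta$ as the $\pi$-pullback of a positive-degree $\Q$-divisor on $\PP^n$ plus an effective exceptional contribution, and conclude bigness from birationality of $\pi$. The only difference is notational---the paper groups your $E_i$'s into $E=\sum_i E_i$ and $E'=\sum_{i\ge r-n}E_i$, so that your residual term $\tfrac{1}{m}\sum_{i=1}^{r-n-1}E_i$ appears as $\tfrac{1}{m}(E-E')$.
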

\begin{proof}
We compute the orbifold canonical bundle following Lemma \ref{lem:kzero}. Recall that we denoted by $E$ the total exceptional divisor of the blow up $\pi: \overline{S} \to \PP^2$ along $T$, and by $E'$ the exceptional divisors in the pullback of $D_{r-n} + \dots + D_{r}$. Therefore we have
\begin{align*}
	K_{\overline{S}} + D + \Delta  &= \left(\pi^* K_{\PP^n} + E\right) + \left( \pi^* (D_{r-n} + \dots + D_r)  - E' \right) + \\
	&\qquad \qquad + \left( 1 - \dfrac{1}{m}\right)\left(\pi^* (D_1 + \dots + D_{r-n-1}) - (E - E')\right) \\
	&= \pi^*(K_{\PP^n} + \left( 1 - \dfrac{1}{m}\right)(D_{1} + \dots + D_{r-n-1}) + D_{r-n} + \dots + D_r) + \dfrac{1}{m} (E-E')
\end{align*}	
The conclusion holds since
\[
	\deg (K_{\PP^n} + \left( 1 - \dfrac{1}{m}\right)(D_{1} + \dots + D_{r-n-1}) + D_{r-n} + \dots + D_r) = -n -1 + \left( 1 - \dfrac{1}{m}\right)d_0 + n + 1 > 0,
	\]
 $\pi$ is a generically finite map, and $E - E'$ is effective.

\end{proof}

To summarize, we have therefore constructed quasi-projective varieties $(X,D)$ that are simply-connected, weakly special but not special. Following our previous paper \cite{RTW} we can now show that these varieties, at least for high enough $m$, present behaviour that contradict the function field and analytic analogue of the \emph{Weak Special Conjecture}, see \cite[Conjecture 12]{RTW}.

We start by a pair $(X,D)$ constructed above, so in particular we have a fibration $F: X \to S = \overline{S} \smallsetminus D$ with orbifold base $\Delta$. Then we have the following result which is a combination of \cite[Theorem 7.4]{RTW2} (together with \cite[Theorem 5.10 and 5.11]{RTW}) for the analytic part, and \cite[Theorem 8.2]{RTW2} together with \cite[Lemma 5.9]{RTW2} for the function field part (we refer to the aforemoentioned papers for definitions and notations).

\begin{proposition}\label{prop:weak_hyp}
	Let $\overline{S},D$, and $\Delta$ as above. Then:
	\begin{enumerate}
		\item for every $\varepsilon > 0$ there exists a positive integer $Q > 0$ and a proper algebraic closed subset $Z$ such that: for every holomorphic map $f: \C \to \overline{S}$ such that $f(\C)$ is not contained in $Z$, the following holds:
		\[
		T_{\calL,f}(r) \leq_{\text{exc}} \varepsilon \sum_{i=1}^r N_f^{[Q]}(\widetilde{D}_i,r).
		\]
		In particular, for every large $m$, the \emph{orbifold} $(\overline{S}, D + \Delta)$ is Brody hyperbolic modulo a proper subset;
		\item there exist an ample line bundle $\calL$ on $\overline{S}$, a positive constant $A>0$ and a proper closed subset $Z \subset \overline{S}$ such that:
		 for every non-singular projective curve $\calC$ and every morphism $\varphi: \calC \to \overline{S}$ such that $\varphi(\calC)$ is not contained in $Z$, the followign holds:
		\[
			\deg \varphi^*\calL \leq A \left( 2g(\calC) - 2 + N^{[1]}_\varphi(D + \lceil \Delta \rceil) \right).
		\]
		In particular, for every large $m$, the \emph{orbifold} $(\overline{S}, D + \Delta)$ is algebraically hyperbolic modulo a proper subset.
	\end{enumerate}
\end{proposition}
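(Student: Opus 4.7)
The strategy is to deduce both statements from truncated Second Main-type inequalities on $\PP^{n}$ and pull them back to $\overline{S}$ along the birational morphism $\pi$. The configuration of $r+1\geq 2n+2$ hypersurfaces $D_0,D_1,\dots,D_r$ in general position satisfies the hypotheses of the truncated Ru-Vojta-type theorem \cite[Theorem 7.4]{RTW2} (together with the auxiliary \cite[Theorem 5.10, 5.11]{RTW}) on the analytic side, and of \cite[Theorem 8.2]{RTW2} on the function field side.

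For part (1), fix $\varepsilon>0$. I would first invoke \cite[Theorem 7.4]{RTW2} on $\PP^{n}$ to produce a truncation level $Q$, a proper closed subset $Z'\subset\PP^{n}$, and a constant $\varepsilon'>0$ so that
\[
  T_{\calO(1),g}(r)
  \leq_{\mathrm{exc}}
  \varepsilon'\sum_{i}\frac{1}{\deg D_i}N_g^{[Q]}(D_i,r)
\]
for every holomorphic $g\colon\C\to\PP^{n}$ with $g(\C)\not\subset Z'$. Then I would apply this to $g=\pi\circ f$ and translate the estimate to $\overline{S}$: the decomposition $\pi^{\ast}D_i=\widetilde{D}_i+E_i$ with $E_i$ supported on $\Exc(\pi)$ gives $N_g^{[Q]}(D_i,r)=N_f^{[Q]}(\widetilde{D}_i,r)+N_f^{[Q]}(E_i,r)$, the exceptional contributions being absorbable after enlarging $Z$ to contain $\pi^{-1}(Z')$ and the components of $\Exc(\pi)$ appearing in the $E_i$'s. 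Since $\calL$ is ample on $\overline{S}$ while $\pi^{\ast}\calO(1)$ is only big, choosing $N$ with $\pi^{\ast}\calO(N)-\calL$ effective yields $T_{\calL,f}(r)\leq N\cdot T_{\calO(1),g}(r)+O(1)$. Assembling these ingredients and rescaling the constant produces the claimed inequality.

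The orbifold Brody hyperbolicity assertion is then a standard consequence. For a hypothetical orbifold entire curve $f\colon\C\to(\overline{S},D+\Delta)$ with $f(\C)\not\subset Z$, one has $f(\C)\cap D=\emptyset$, so $N_f^{[Q]}(\widetilde{D}_i)$ vanishes for $r-n\leq i\leq r$, and every intersection with $\widetilde{D}_i$ for $1\leq i\leq r-n-1$ has multiplicity divisible by $m$, whence $N_f^{[Q]}(\widetilde{D}_i,r)\leq(Q/m)\,N_f(\widetilde{D}_i,r)\leq(Q/m)\,T_{\calO(\widetilde{D}_i),f}(r)+O(1)$. Substituting into the inequality of (1) and comparing $T_{\calO(\widetilde{D}_i),f}$ with $T_{\calL,f}$ via ampleness of $\calL$ yields a bound of the form $T_{\calL,f}(r)\leq_{\mathrm{exc}}(C/m)\,T_{\calL,f}(r)$ with $C$ independent of $m$; taking $m$ large enough then forces $T_{\calL,f}$ to be bounded, hence $f$ to be constant.

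Part (2) is proved by the same pullback scheme applied to the function field truncated Vojta inequality \cite[Theorem 8.2]{RTW2}, together with \cite[Lemma 5.9]{RTW2} to replace $N^{[Q]}$ by $N^{[1]}$ in the resulting estimate, and the analogous orbifold reduction on the curve $\calC$. The main technical obstacle I expect is the exceptional-divisor bookkeeping in the pullback step: one must track the contributions $N_f^{[Q]}(E_i,r)$ (respectively $N_\varphi^{[1]}(E_i)$) carefully so that they are absorbed without inflating $\varepsilon$ or $A$, and ensure that the comparison between $T_{\calL,f}$ and $T_{\pi^{\ast}\calO(1),f}$ on the big-but-not-ample side survives the final orbifold reduction.
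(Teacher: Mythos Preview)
Your proposal is correct and takes the same approach as the paper, invoking exactly the same key inputs: \cite[Theorem~7.4]{RTW2} together with \cite[Theorems~5.10 and~5.11]{RTW} for part~(1), and \cite[Theorem~8.2]{RTW2} together with \cite[Lemma~5.9]{RTW2} for part~(2). The paper itself gives no proof beyond these citations, so your pullback bookkeeping from $\PP^n$ to $\overline{S}$ and the orbifold reduction for large $m$ are a reasonable fleshing-out of an argument the paper leaves implicit.
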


Finally, combining Proposition \ref{prop:weak_hyp} with the techniques developed in \cite{RTW} we obtain the following main result of this section.

\begin{theorem}\label{th:main_th_wspecial}
	For every $n \geq 3$ there exists a smooth quasi-projective variety $X$ of dimension $n$ which is simply-connected,  weakly special but not special, together with a fibration $F: X \to (S:=\overline{S} \setminus D, \Delta)$
	Moreover the following holds:
	\begin{enumerate}
		\item every entire curve $\C \to X$ is algebraically degenerate;
		\item there exist a positive constant $A > 0$ and a proper algebraic closed subset $Z \subset S$, such that for every morphism $\varphi: \calC \to \overline{X}$, such that $F(\varphi(\calC))$ is not contained in $Z$, the following holds:
		\[
			\deg F(\varphi(\calC)) \leq A \left( 2g(\calC) - 2 + N^{[1]}_{\varphi \circ F}(D) \right)
		\]
	\end{enumerate}
\end{theorem}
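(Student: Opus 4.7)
The plan is to take $X$ as the variety produced by the construction preceding the theorem, choosing the ambient projective space of dimension $n-1\geq 2$ so that $\dim X=\dim S+\dim T-1=n$. Its structural properties (smooth quasi-projective, simply-connected, weakly special, not special, equipped with the fibration $F$ over $(S,\Delta)$) then follow at once from Theorem \ref{th:wspecial_construction} together with Lemmas \ref{lem:kzero} and \ref{lem:orb_general_type}. The substance of the argument is to deduce (1) and (2) from the orbifold hyperbolicity of $(\overline{S},D+\Delta)$ provided by Proposition \ref{prop:weak_hyp}.

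The bridge between $X$ and its orbifold base is that $F$ has multiplicity $m$ along each component of $\overline{S}_0=\widetilde{D}_1+\dots+\widetilde{D}_{r-n-1}$. Consequently, for any entire curve $f\colon\C\to X$ or morphism $\varphi\colon\calC\to\overline{X}$, the composition with $F$ is an orbifold morphism to $(\overline{S},\Delta)$: every local intersection with a component of $\overline{S}_0$ has multiplicity at least $m$. This converts truncated counting functions into heights divided by $m$: in the Nevanlinna setting $N_g^{[Q]}(\widetilde{D}_j,r)\leq (Q/m)\,N_g(\widetilde{D}_j,r)$, and in the algebraic setting $N_\psi^{[1]}(\overline{S}_0)\leq (C'/m)\,\deg\psi^{\ast}\calL+O(1)$, using the First Main Theorem and a comparison of the class $[\overline{S}_0]$ with an ample class $\calL$ on $\overline{S}$.

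For (1), set $g=F\circ f$. If $g(\C)\subseteq Z$ then $f(\C)\subseteq F^{-1}(Z)$ is contained in a proper algebraic subset and we are done; otherwise Proposition \ref{prop:weak_hyp}(1) applied to $g$ (which avoids $D$, since $f$ lands in $X$) combined with the orbifold estimate yields
\[
  T_{\calL,g}(r)\leq_{\text{exc}}\varepsilon\sum_{j=1}^{r-n-1}N_g^{[Q]}(\widetilde{D}_j,r)\leq \frac{\varepsilon CQ}{m}\,T_{\calL,g}(r)+O(1).
\]
Fixing $m$ large enough so that $\varepsilon CQ/m<1$ forces $T_{\calL,g}(r)=O(1)$, hence $g$ is constant and $f(\C)$ is confined to a one-dimensional fiber of $F$; since $\dim X\geq 3$, this is algebraic degeneracy.

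For (2), apply Proposition \ref{prop:weak_hyp}(2) to $\psi=F\circ\varphi$. Since $\lceil\Delta\rceil=\overline{S}_0$ and the divisors $D$ and $\overline{S}_0$ share no components, the inequality reads
\[
  \deg\psi^{\ast}\calL\leq A\bigl(2g(\calC)-2+N_\psi^{[1]}(D)+N_\psi^{[1]}(\overline{S}_0)\bigr),
\]
and the orbifold bound allows us to absorb $N_\psi^{[1]}(\overline{S}_0)$ into the left-hand side whenever $AC'/m<1$, giving the desired inequality with constant $A'=A/(1-AC'/m)$. The main subtlety is choosing a single large $m$ that simultaneously yields a simply-connected weakly special $X$ and validates both absorption steps, but this is harmless because all thresholds depend only on the fixed geometric data $(\overline{S},D,\overline{S}_0,\calL)$.
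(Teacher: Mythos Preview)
Your proposal is correct and follows essentially the same route as the paper. The paper's proof is terse and delegates the work to \cite[Theorem 5.11]{RTW} for part (1) and \cite[Lemma 5.9]{RTW} for part (2); what you have written is precisely an unpacking of those two results---the orbifold multiplicity $m$ along $\overline{S}_0$ forces truncated counting functions to be bounded by $(1/m)$ times a height, which is then absorbed into the left-hand side of the inequalities coming from Proposition~\ref{prop:weak_hyp}. One minor slip: since you take the ambient projective space to be $\PP^{n-1}$, the indices for $\overline{S}_0$ and $D$ should be shifted accordingly (the construction's ``$n$'' is your $n-1$), but this is purely notational.
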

\begin{proof}
	The construction of the variety $X$ and of the fibration $F:X \to S$ was given above. In particular, in the above notation, $D = (\widetilde{D}_{r-n} + \dots + \widetilde{D}_{r})$.

	The proof of (1) follows the same steps of the proof of \cite[Theorem 5.11]{RTW}: part (1) of Proposition \ref{prop:weak_hyp} gives the analogue of equation (5.9) in op.cit. and then one concludes for sufficiently large multiplicities $m$.

	The proof of (2) is obtained by combining part (2) of Proposition \ref{prop:weak_hyp} with \cite[Lemma 5.9]{RTW}.
\end{proof}

\section{A projective simply connected example}
In this section we let $K$ be an algebraically closed field of characteristic zero and $F$ be the field $K(t)$. We recall Lang's Conjecture for varieties over $F$.

\begin{conj}[{Lang, see \cite[3.16]{Lang91}}]\label{conj:LangFF}
   Let $X$ be a smooth projective variety $X$ defined over $F$, non birational to an isotrivial variety. Assume that $X$ is of general type, then for every finite extension $L \supset F$, the set of $L$-rational points $X_F(L)$ is not Zariski dense. 
\end{conj}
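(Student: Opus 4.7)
This statement is Lang's conjecture over a function field of transcendence degree $1$ over $K$, and it is open in the generality as stated; rather than attempting to settle it I will outline the strategy one would try, highlight the obstructions, and indicate what portion can reasonably be proved under the extra hypotheses used later in this section.

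The plan is to convert the problem into a height bound for sections. An $L$-point of $X_F$, with $L$ the function field of a smooth projective curve $\calD\to \Spec K$, corresponds after spreading out to a section $\sigma\colon \calD\to \calX_\calD$ of a proper regular model $\pi\colon \calX\to \calD$ of $X_L$. Zariski non-density of $X_F(L)$ for every finite $L\supset F$ is implied by the existence of a proper closed subset $Z\subset X$ and a constant $C>0$ such that, for every such $\calD$ and every non-constant section $\sigma$ whose generic point lies outside $Z$, a Lang-Vojta style inequality of the form
\[
\deg \sigma^{\ast}\omega_{\calX/\calD}\ \le\ C\bigl(2g(\calD)-2+\#\{\text{bad fibers}\}\bigr)
\]
holds. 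From such an inequality a Northcott-type argument over moduli confines the union of images of sections to a proper Zariski closed subset of $X$, yielding non-density.

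The main tool I would try to deploy is the theorem of Viehweg-Zuo: for a non-isotrivial family whose general fiber is of general type, the base carries a big coherent subsheaf of some symmetric power of $\Omega^{1}_{\calX/\calD}(\log)$ after a suitable compactification. Pulling this back along a section and applying Bogomolov's lemma on foliations, together with Demailly-type jet differentials, one hopes to extract an algebraic differential equation that every section must satisfy. This is the function field analogue of the Green-Griffiths-Lang philosophy: jet differentials force algebraic degeneracy. In the function field setting the required inequality has been established unconditionally in a number of restricted settings---among them the case in which $\Omega^{1}_{X}$ is already ample, which is precisely Debarre's situation that will be used to establish the examples advertised in the introduction.

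The principal obstacle is the same one that blocks the Green-Griffiths-Lang conjecture on the analytic side: no general construction is known which produces enough global jet differentials on an arbitrary smooth projective variety of general type. Without positivity of some piece of the cotangent bundle of the total space $\calX$, one has no way of writing down a Viehweg-Zuo sheaf explicitly, and hence no way of enforcing the height inequality above. For this reason I would expect any complete proof of Conjecture \ref{conj:LangFF} in full generality to require substantial ideas beyond positivity, and any concrete argument I could write would necessarily restrict to families where $\Omega^{1}_X$, or some sufficiently positive tensor piece thereof, is explicit---where Noguchi-Demailly style hyperbolicity delivers the inequality directly. This is precisely the route the rest of this section takes.
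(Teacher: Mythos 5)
You have correctly identified that the quoted statement is an open conjecture, not a theorem with a proof in the paper; the paper merely records it as Conjecture \ref{conj:LangFF} to set up what is actually proved, namely Theorem \ref{th:proj_nonisotrivial_main}, which produces particular simply connected non-isotrivial examples satisfying it. Your identification of the tractable route --- Debarre-type varieties with ample cotangent bundle, for which finiteness of non-constant maps from curves (Proposition \ref{finitehom}) yields the needed degeneracy --- is precisely the strategy the paper pursues in the remainder of the section, so there is no gap to flag: a blind attempt at a proof of the conjecture in full generality would indeed be misplaced, and you were right to say so.
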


In this section we will prove the following Theorem:

\begin{theorem}\label{th:proj_nonisotrivial_main}  For every positive integer $n>1$, there exists a smooth projective {\rm simply connected} variety $X_F$ of general type, of dimension $n$, which verifies Conjecture \ref{conj:LangFF}.
\end{theorem}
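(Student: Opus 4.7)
The plan is to realize $X_F$ as the generic fiber of a Lefschetz-type pencil on a Debarre variety. I would start with a smooth projective simply connected variety $Y$ of dimension $n+1$ with ample cotangent bundle, as constructed in \cite{debarre}. After fixing a projective embedding of $Y$, I would choose a sufficiently general pencil $\{H_0 + t H_1 \colon t \in \mathbb{P}^1\}$ of hyperplane sections. Let $p\colon \tilde{Y} \to Y$ be the blow-up along the base locus $B = H_0 \cap H_1$ and $\pi\colon \tilde{Y} \to \mathbb{P}^1$ the induced fibration. The candidate is then $X_F$, the generic fiber of $\pi$, which is a smooth projective variety of dimension $n$ over $F = K(t)$.

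The abstract properties of $X_F$ follow cleanly. Simple connectedness is a consequence of the Lefschetz hyperplane theorem: since $\dim X_F = n \geq 2$ and $\pi_1(Y) = 0$, the surjection $\pi_1(X_F) \twoheadrightarrow \pi_1(Y)$ forces $\pi_1(X_F) = 0$. The cotangent bundle $\Omega_{X_F}$ is ample because it is a quotient of $\Omega_Y|_{X_F}$, itself the restriction of the ample bundle $\Omega_Y$; hence $K_{X_F}$ is ample and $X_F$ is of general type. For a sufficiently general pencil, non-isotriviality can be established using the finiteness of $\mathrm{Aut}(Y)$ (a consequence of ample cotangent) combined with a dimension count on the locus of hyperplanes producing a fixed isomorphism type.

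For the function field Lang conjecture, I argue by contradiction: suppose $X_F(L)$ is Zariski dense for some finite extension $L \supset F$, and let $C_L$ be the smooth projective curve with function field $L$. Each $L$-point spreads out, via the valuative criterion applied to the proper morphism $\pi$, to a morphism $\sigma_P\colon C_L \to \tilde{Y}$ with $\pi \circ \sigma_P$ equal to the given cover $C_L \to \mathbb{P}^1$. Set $f_P \bydef p \circ \sigma_P\colon C_L \to Y$ and split into two cases. If $f_P$ is constant, then $\sigma_P(C_L)$ lies in a $\mathbb{P}^1$-fiber of $p$ over a point of $B$; such sections correspond to points of $B(L)$, producing $L$-points that lie in the proper subvariety $B_F \subset X_F$. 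If $f_P$ is non-constant, then $f_P$ is finite onto its image, so $f_P^* \Omega_Y$ is an ample vector bundle on the smooth curve $C_L$; this yields $H^0(C_L, f_P^* T_Y) = 0$, so $[f_P]$ is an isolated point of $\mathrm{Mor}(C_L, Y)$. Demailly's algebraic hyperbolicity of $Y$ bounds $\deg f_P^* H$ by a constant multiple of $2g(C_L) - 2$, and the combination of discreteness with boundedness yields only finitely many such $f_P$, hence only finitely many $L$-points in this case. Therefore $X_F(L)$ is contained in $B_F \cup \{\text{finite set}\}$, a proper subvariety of $X_F$, contradicting Zariski density.

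The main obstacle is the verification of non-isotriviality of the pencil; the remaining steps are comparatively routine given the strong hyperbolicity of $Y$. A secondary concern is the careful handling of the base locus $B$ and the singular fibers of $\pi$ (arranged to be nodal by choosing a Lefschetz pencil), so that the spreading-out procedure and the dichotomy on $f_P$ apply uniformly to every $L$-point, regardless of its position relative to the exceptional divisor of $p$.
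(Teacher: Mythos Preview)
Your construction and argument match the paper's almost exactly: generic fiber of a pencil on a simply connected Debarre variety of dimension $n+1$, Lefschetz for $\pi_1$, and spreading out $L$-points to curves $C_L\to\tilde Y$ which either land in the exceptional divisor (giving points of the fixed proper subvariety $B_F\subset X_F$) or give non-constant maps to $Y$ (finitely many, by rigidity plus the degree bound coming from ample cotangent). One logical slip: a \emph{surjection} $\pi_1(X_F)\twoheadrightarrow\pi_1(Y)=0$ tells you nothing about $\pi_1(X_F)$; you need the Lefschetz \emph{isomorphism}, which is precisely what $n\geq 2$ provides. The one substantive difference concerns non-isotriviality, which you flag as the main obstacle and propose to verify directly via finiteness of $\mathrm{Aut}(Y)$ and a dimension count. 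The paper bypasses this entirely: having already shown that $X_F(L)$ is never Zariski dense, it simply observes that a variety birational to an isotrivial one would, after some finite base change $L/F$, have Zariski dense $L$-points (a constant family over the algebraically closed field $K$ has dense sections), a contradiction. So non-isotriviality is deduced as a \emph{consequence} of the degeneracy statement rather than checked as a separate input, and your main obstacle evaporates.
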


A variety $X_F$ over $F$ is said to be simply connected if the natural morphism of groups \linebreak $\pi_1(X_F,x_o)\to Gal(\overline{F}/F)$ is an isomorphism ($\overline F$ being an algebraic closure of $F$). This is equivalent to say that the $\overline F$ variety $X_F\otimes_F\overline F$ is simply connected.

\begin{remark}
   \begin{itemize}
      \item The same strategy of the proof gives examples of smooth projective varieties over $F$ non birational to an isotrivial variety with the fundamental group of any projective-variety (as in \cite[Proposition 26]{debarre}) which verify Conjecture \ref{conj:LangFF}.
      \item The proof of Theorem \ref{th:proj_nonisotrivial_main} shows that a stronger statement holds in these examples, namely the existence of an \emph{exceptional set}, i.e. a proper closed set, $T_F$, such that $X_F(L)\smallsetminus T_F(L)$ is finite. As pointed out in \cite[Remark 3.3]{RTW}, one cannot expect such a strong statement to hold for every general type variety defined over a function field, assuming only that the variety is not birational to an isotrivial variety.
   \end{itemize}
\end{remark}

We recall the following facts on which the proof of the theorem will relay on:

If $\cE$ is a vector bundle on a variety $Z$, we will denote by $\pi_\cE:\PP(\cE)\to Z$ the natural projection. 

\begin{definition} A vector bundle $\cE$ on a projective variety $Z$ is said to be ample if the tautological line bundle $\cO(1)$ on $\PP(\cE)$ is ample. \end{definition}

\begin{proposition}[{See \cite[Corollary 6.3.30]{Laza}}]\label{finitehom}Let $Z$ be a smooth projective variety with ample cotangent bundle. Then, for every smooth projective curve $Y$, the set $Hom_\ast(Y,Z)$ of non constant maps from $Y$ to $Z$ is finite.\end{proposition}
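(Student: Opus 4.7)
The plan is to combine two standard consequences of the ampleness of $\Omega_Z^1$: a boundedness statement (Demailly's algebraic hyperbolicity) and a rigidity statement (vanishing of deformations). Together, these force $\Hom_\ast(Y,Z)$ to be a finite union of reduced, zero-dimensional components of a quasi-projective scheme, hence finite.

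First, I would establish the fundamental local fact: for any non-constant $\varphi\colon Y\to Z$, the pullback $\varphi^{*}\Omega_Z^1$ is an \emph{ample} vector bundle on $Y$. Indeed, $\varphi$ factors as $Y\to \widetilde{C}\to C\hookrightarrow Z$, where $C=\varphi(Y)$ and $\widetilde{C}\to C$ is the normalization. Restriction of an ample vector bundle to a closed subvariety stays ample, and pullback by a finite morphism preserves ampleness; composing these gives the claim.

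Next, I would deduce rigidity. Since $\varphi^{*}T_Z=(\varphi^{*}\Omega_Z^1)^{\vee}$ has ample dual, every sub-line-bundle $L\hookrightarrow \varphi^{*}T_Z$ satisfies $\deg L<0$ (otherwise the dual quotient would be a non-ample quotient of an ample bundle on a curve). A nonzero section $\cO_Y\to \varphi^{*}T_Z$ would saturate to a sub-line-bundle of degree $\geq 0$, contradiction. Therefore the Zariski tangent space to $\Hom(Y,Z)$ at $\varphi$,
\[
T_{\varphi}\Hom(Y,Z)=H^{0}\bigl(Y,\varphi^{*}T_Z\bigr),
\]
vanishes, so $\varphi$ is an isolated reduced point of $\Hom(Y,Z)$.

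Then I would establish boundedness. Using the surjection $\varphi^{*}\Omega_Z^1\twoheadrightarrow \Omega_Y^1$ (the cotangent map, which is surjective at unramified points, and whose saturation as a quotient line bundle we use), we get a lift $\widetilde{\varphi}\colon Y\to \PP(\Omega_Z^1)$ with $\widetilde{\varphi}^{*}\cO(1)$ a quotient of $\Omega_Y^1$, hence of degree bounded by $2g(Y)-2$. Fixing an ample line bundle $\cL$ on $Z$, the ampleness of $\cO_{\PP(\Omega_Z^1)}(1)$ yields an integer $m$ such that $m\cO(1)-\pi^{*}\cL$ is nef on $\PP(\Omega_Z^1)$. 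Intersecting with $\widetilde{\varphi}_{*}[Y]$ gives
\[
\deg \varphi^{*}\cL \;\leq\; m\cdot \deg \widetilde{\varphi}^{*}\cO(1)\;\leq\; m(2g(Y)-2),
\]
an explicit version of Demailly's algebraic hyperbolicity inequality. For the \emph{fixed} curve $Y$, this bounds $\deg\varphi^{*}\cL$ by a constant $D$.

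Finally, combining the two: non-constant morphisms all lie in the quasi-projective scheme $\Hom_{\leq D}(Y,Z)$, which is of finite type and therefore has only finitely many irreducible components. Each non-constant $\varphi$ is itself a full (zero-dimensional) irreducible component by the rigidity step, so there can be only finitely many. The main conceptual step is the pointwise ampleness of $\varphi^{*}\Omega_Z^1$ on $Y$; once this is in hand, both the boundedness bound and the vanishing of $H^{0}(Y,\varphi^{*}T_Z)$ fall out by standard vector bundle arguments on curves, and the conclusion is immediate.
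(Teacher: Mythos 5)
The paper itself offers no proof of this proposition; it simply cites \cite[Corollary~6.3.30]{Laza}, so there is no in-paper argument to compare against. Your proof is correct and is essentially the standard argument behind that reference: you combine a boundedness step (Demailly's algebraic-hyperbolicity bound $\deg\varphi^{*}\cL \leq m(2g(Y)-2)$, obtained by lifting $\varphi$ to $\PP(\Omega_Z^1)$ via the quotient line bundle given by the image of the cotangent map and intersecting with a nef class of the form $m\cO(1)-\pi^*\cL$) with a rigidity step (the ampleness of $\varphi^{*}\Omega_Z^1$ --- which holds because $\varphi$ is finite onto its image, and ampleness is preserved under restriction to closed subvarieties and pullback by finite maps --- forces every line subsheaf of $\varphi^{*}T_Z$ to have negative degree, hence $H^{0}(Y,\varphi^{*}T_Z)=0$ and each $\varphi$ is an isolated reduced point of the Hom scheme). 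Finiteness then follows since $\Hom_{\leq D}(Y,Z)$ is of finite type. Two small points worth making explicit if you write this up: the image of $d\varphi^{\vee}\colon\varphi^{*}\Omega_Z^1\to\Omega_Y^1$ is automatically a line bundle (a nonzero subsheaf of a line bundle on a smooth curve), which is what legitimizes the lift to $\PP(\Omega_Z^1)$; and in the rigidity step the degree bound is applied to the \emph{saturation} of a given line subsheaf, so one should state clearly that saturated sub-line-bundles of the dual of an ample bundle on a curve have negative degree (dualize the saturated short exact sequence to exhibit $L^{\vee}$ as a quotient of the ample bundle).
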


We can  generalize the proposition above to:

\begin{proposition}\label{finitehom2} Let $Z$ be a projective variety, birational to a variety $Z'$ which is projective smooth and with ample cotangent bundle. Then there is a proper closed subset $B\varsubsetneq Z$ for which the following property holds: For every smooth projective curve $Y$, the set $Hom_B(Y,Z)$ of non constant maps such that $f(Y)\not\subset B$, is finite.\end{proposition}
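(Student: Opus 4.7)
The plan is to reduce the problem on $Z$ to Proposition \ref{finitehom} applied on $Z'$ by resolving the given birational map. First, take a common resolution: there exist a smooth projective variety $W$ and birational morphisms $p\colon W\to Z$ and $q\colon W\to Z'$ factoring the birational map $Z'\dashrightarrow Z$. Let $U\subset Z$ be the maximal open subset over which $p$ is an isomorphism, and let $E_{q}\subset W$ be the exceptional locus of $q$. Define
\[
	B \bydef (Z\smallsetminus U)\cup p(E_{q}).
\]
Since $\dim E_q<\dim W=\dim Z$ and $p$ is proper, $p(E_q)$ is a proper closed subset of $Z$; similarly $Z\smallsetminus U\subsetneq Z$. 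Hence $B\subsetneq Z$ is a proper closed subset.

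Next, fix a smooth projective curve $Y$ and let $f\colon Y\to Z$ be a non-constant morphism with $f(Y)\not\subset B$. Because $f(Y)\not\subset Z\smallsetminus U$, the set $f^{-1}(U)$ is a nonempty open subset of $Y$; composing with $(p|_{p^{-1}(U)})^{-1}$ produces a morphism $f^{-1}(U)\to W$, that is, a rational map $Y\dashrightarrow W$. Since $Y$ is a smooth projective curve and $W$ is projective, this rational map extends to a morphism $\widetilde{f}\colon Y\to W$ with $p\circ\widetilde{f}=f$; uniqueness on the dense open subset $f^{-1}(U)$ forces uniqueness of $\widetilde{f}$. Moreover, because $f(Y)\not\subset p(E_q)$, the image $\widetilde{f}(Y)$ is not contained in $E_q$.

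Now set $g\bydef q\circ\widetilde{f}\colon Y\to Z'$. First, $g$ is non-constant: otherwise $\widetilde{f}(Y)$ would be contained in a single fiber $q^{-1}(z')$, but $\widetilde{f}(Y)$ contains a point $w$ outside $E_q$ at which $q$ is a local isomorphism, so this fiber equals $\{w\}$ in a neighborhood of $w$, making $\widetilde{f}$ (and therefore $f$) constant, a contradiction. Second, $f\mapsto g$ is injective on $\Hom_B(Y,Z)$: if two such $f_1,f_2$ produced the same $g$, their lifts $\widetilde{f_1},\widetilde{f_2}$ would both lift $g$ through $q$ while avoiding $E_q$, hence would coincide on the dense open subset where $q$ is an isomorphism and thus everywhere; then $f_i=p\circ\widetilde{f_i}$ forces $f_1=f_2$.

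The assignment $f\mapsto g$ therefore embeds $\Hom_{B}(Y,Z)$ into $\Hom_{\ast}(Y,Z')$, which is finite by Proposition \ref{finitehom}; hence $\Hom_{B}(Y,Z)$ is finite. The main subtlety lies in choosing $B$ large enough so that both the $p$-lift is well defined and the resulting $q$-projection is non-constant, while still keeping $B$ a proper closed subset; adding the piece $p(E_q)$ is precisely what balances these two requirements.
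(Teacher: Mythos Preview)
Your proof is correct and follows essentially the same route as the paper: resolve the birational map by a common roof $W\to Z$, $W\to Z'$, take $B$ to be the image in $Z$ of the two exceptional loci, lift any $f\in\Hom_B(Y,Z)$ to $W$ via smoothness of $Y$, and project to $Z'$ to embed $\Hom_B(Y,Z)$ into $\Hom_\ast(Y,Z')$. You supply more detail than the paper does---in particular the verification that $g$ is non-constant and that $f\mapsto g$ is injective---which the paper leaves implicit.
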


\begin{proof} We can find a diagram of birational morphisms

\begin{equation}
\xymatrix{&Z''\ar[ld]_p\ar[rd]^q&\\
Z& &Z'\\}
\end{equation}
Let $E_p$ and $E_q$ be the exceptional loci of $p$ and $q$ respectively. The closed subset 
 $B=p(E_p\cup E_q)$ does not coincide with $Z$.  Suppose that $f: Y\to Z$ is a map whose image is not contained in $B$. Then, since $Y$ is smooth, $f$ lifts uniquely to a map $f'':Y\to Z''$ and to a map $f':=q\circ f':Y\to Z'$. Consequently we find an injective map $Hom_B(Y,Z)\hookrightarrow Hom_\ast(Y,Z')$. The conclusion follows from Proposition \ref{finitehom}.
\end{proof}

We need also the following fact:

\begin{proposition} There exist smooth projective varieties $Z$ which are simply connected and with ample cotangent bundle.\end{proposition}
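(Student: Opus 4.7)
The plan is to deduce this from \cite[Proposition 26]{debarre}, which is precisely the statement that for any smooth projective variety $W$ one can construct a smooth projective variety $Z$ with ample cotangent bundle and $\pi_1(Z)\cong\pi_1(W)$. Applying this result with $W=\PP^n$ (or any other simply connected smooth projective variety) gives the proposition.

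For the reader's convenience I would briefly recall the architecture of Debarre's construction. The main theorem of \cite{debarre} produces smooth projective varieties with ample cotangent bundle as sufficiently general complete intersections of sufficiently positive divisors in an abelian variety (or in the product of an abelian variety with an auxiliary factor). The passage from "ample cotangent is realized by some variety" to "ample cotangent is realized by a variety with prescribed fundamental group" is the content of \cite[Proposition 26]{debarre}: given $W$, one considers an auxiliary variety $W\times A$ (with $A$ an abelian variety of suitable dimension) and takes a smooth complete intersection $Z\subset W\times A$ cut out by divisors that are sufficiently ample and in sufficiently general position. By Debarre's positivity arguments, $\Omega^1_Z$ is ample; by a Lefschetz-type argument, for $\dim Z\geq 2$ the projection $Z\to W$ induces an isomorphism $\pi_1(Z)\cong\pi_1(W)$.

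The main obstacle, already resolved by Debarre, is to reconcile the two conditions: the ampleness of $\Omega^1_Z$ requires the defining divisors of the complete intersection to be sufficiently positive and the codimension of $Z$ to be sufficiently large compared to $\dim Z$, while the Lefschetz argument controlling the fundamental group imposes only mild dimension constraints; both can be satisfied simultaneously by enlarging $A$. Taking $W$ simply connected yields a smooth projective simply connected $Z$ with ample cotangent bundle of any prescribed dimension $n\geq 2$, as required.
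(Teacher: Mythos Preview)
Your proposal is correct and follows exactly the paper's approach: the paper simply states that this is a particular case of \cite[Proposition 26]{debarre}. Your additional explanation of Debarre's construction is a helpful expansion, but the core argument is identical.
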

This is a particular case of Proposition 26 in \cite{debarre}.

Now we can prove Theorem \ref{th:proj_nonisotrivial_main}:
\begin{proof}[Proof of Theorem \ref{th:proj_nonisotrivial_main}] Let $Z$ be a simply connected smooth projective variety of dimension $n+1$, with ample cotangent bundle. Let $L$ be a very ample line bundle on $Z$. Let $H_1$ and $H_2$ be two smooth global sections of $L$ intersecting transversally on a smooth subvariety $B$ of codimension two. 

Let $\tilde Z\to Z$ be the blow up of $Z$ along $B$. The two sections define a morphism $\pi :\tilde Z\to \PP^1$. The cartesian product 
\begin{equation}
\xymatrix{X_F \ar[r] \ar[d] & \tilde Z \ar[d]^\pi\\
\Spec(F) \ar[r]^\eta &\PP^1\\}
\end{equation}
defines a variety $X_F$ over $F$. 

For every closed point $t\in\PP^1$ the variety $X_t:=\pi^{-1}(t)$, if smooth, is simply connected because of Lefschetz hyperplane Theorem and the hypothesis that $n>1$. Consequently $X_F$ is simply connected. 

Let $L/F$ is a finite extension and let $B_L\to \PP^1$ be the finite covering associated to it. Each point $p\in X_F(L)$ extends, by the evaluative criterion of properness, to a morpism $P:B_F\to Z'$. 

By Proposition \ref{finitehom2}, if $P(B_F)$ is not contained in the exceptional divisor of $\tilde Z$, it belongs to a finite list. 
Consequently, there is a closed set $T_F\subsetneq X_F$ such that, $X_F(L)\setminus T_F(L)$ is finite. 

The variety $X_F$ is not birationally equivalent to an an isotrivial variety, otherwise, we could find a finite extension $L$ for which $X_F(L)$ is Zariski dense. \end{proof}

\bibliography{references}{}
\bibliographystyle{alpha}

\end{document}